\newcommand{\pow}{\mathscr{P}} %notation for power set
\newcommand{\filt}{\mathcal{F}} %notation for filters
\newcommand{\csf}{{2}\sp\omega} %Cantor set
\newcommand{\pair}[1]{\langle #1\rangle} %ordered pairs
\newcommand{\cube}{\mathcal{Q}} %Hilbert cube
\newcommand{\cubein}{\mathsf{s}} %Hilbert cube interior
\newcommand{\id}{\mathbf{1}} %constant function equal to 1
\newtheorem{thm}{Theorem}[section]
\newtheorem{lemma}[thm]{Lemma}
\newtheorem{coro}[thm]{Corolary}
\newtheorem{ques}[thm]{Question}
\title{A non-discrete space $X$ with $C_p(X)$ Menger at infinity}
\author[Bella]{Angelo Bella}
\author[Hern\'andez]{Rodrigo Hern\'andez-Guti\'errez}
\address[Bella]{Department of Mathematics and Computer Science, University of Catania, Citt\'a universitaria, viale A. Doria 6, 95125 Catania, Italy}
\email[Bella]{bella@dmi.unict.it}
\address[Hern\'andez]{Departamento de Matem\'aticas, Universidad Aut\'onoma Metropolitana campus Iztapalapa, Av. San Rafael Atlixco 186, Col. Vicentina, Iztapalapa, 09340, Mexico city, Mexico}
\email[Hern\'andez]{rodrigo.hdz@gmail.com}
\date{\today}
\subjclass[2010]{Primary: 54D20; Secondary: 54A35, 54C35, 54D40, 54D80, 54H11}
\keywords{Menger spaces, Non-meager P-filter, Point-wise convergence topology}
\begin{document}

\begin{abstract}
In a paper by Bella, Tokg\"os and Zdomskyy it is asked whether there exists a Tychonoff space $X$ such that the remainder of $C_p(X)$ in some compactification is Menger but not $\sigma$-compact. In this paper we prove that it is consistent that such space exists and in particular its existence follows from the existence of a Menger ultrafilter.
\end{abstract}

\maketitle

\section{Introduction}

A space $X$ is called Menger if for every sequence $\{\mathcal{U}_n:n\in\omega\}$ of open covers of $X$ one may choose finite sets $\mathcal{V}_n\subset\mathcal{U}_n$ for all $n\in\omega$ in such a way that $\bigcup\{\mathcal{V}_n:n\in\omega\}$ covers $X$. Given a property $\mathbf{P}$, a Tychonoff space $X$ will be called \emph{$\mathbf{P}$ at infinity} if $\beta X\setminus X$ has $\mathbf{P}$.
 
Let $X$ be a Tychonoff space. It is well-known that $X$ is $\sigma$-compact at infinity if and only if $X$ is \v Cech-complete. Also, Henriksen and Isbell in \cite{hen-is} proved that  $X$ is Lindel\"of at infinity if and only if $X$ is of countable type. Moreover, the Menger property implies the Lindel\"of property and is implied by $\sigma$-compactness. So it was natural for the authors of \cite{aurichi-bella} to study when  $X$ is Menger at infinity.

Later, the authors of \cite{menger-remainders} study when a topological group is Menger, Hurewicz and Scheepers at infinity. The Hurewicz and Scheepers properties are other covering properties that are stronger than the Menger property and weaker than $\sigma$-compactness (see the survey \cite{tsaban-survey} by Boaz Tsaban). Essentially, \cite{menger-remainders} has two main results. 

\begin{thm}\cite[Theorem 1.3]{menger-remainders}
If $G$ is a topological group and $\beta G\setminus G$ is Hurewicz, then $\beta G\setminus G$ is $\sigma$-compact.
\end{thm}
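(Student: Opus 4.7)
The plan is to use the homogeneity of $G$ together with the Hurewicz hypothesis on $\beta G\setminus G$ to force $\sigma$-compactness of the remainder. The entry point is that the Hurewicz property implies Lindel\"ofness, so the Henriksen--Isbell theorem recalled in the introduction gives that $G$ is of countable type: there is a non-empty compact $K\subset G$ with a decreasing countable base $\{U_n:n\in\omega\}$ of open neighborhoods in $G$.

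The cleanest route I would try uses the structure theory of countable-type topological groups to extract a compact subgroup $H\subseteq G$ such that the quotient $G/H$ is metrizable. Then $\pi\colon G\to G/H$ is a perfect map (a quotient by a compact subgroup is always perfect), extending to $\beta\pi\colon\beta G\to\beta(G/H)$ with $\beta\pi^{-1}(G/H)=G$, so its restriction to the remainder is perfect too. Perfect maps preserve the Hurewicz property, so $\beta(G/H)\setminus(G/H)$ is Hurewicz. I would then invoke the metric-space version of the theorem---that a metrizable $M$ with $\beta M\setminus M$ Hurewicz is \v Cech-complete---to conclude $G/H$ is \v Cech-complete, and pull back along the perfect map $\pi$ to get $G$ \v Cech-complete, equivalently $\beta G\setminus G$ is $\sigma$-compact.

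The main obstacle is translating local countable-type data into a usable global structure of $G$: specifically the extraction of a compact subgroup with metrizable quotient from mere existence of a compact of countable character. This is classical for locally compact groups but more delicate in the general (non-locally-compact) countable-type case. As a back-up, if that structure-theoretic input proves unavailable, I would attempt a direct construction: for each $n$, build an open cover $\mathcal U_n$ of $\beta G\setminus G$ consisting of traces on the remainder of sets of the form $\beta G\setminus\overline C^{\beta G}$ for $C\subset G$ closed, engineered so that any finite subfamily $\mathcal V_n\subset\mathcal U_n$ corresponds to sets $C$ whose intersection in $G$ is empty---making the complement of $\bigcup\mathcal V_n$ in $\beta G\setminus G$ closed in $\beta G$, hence compact---and then apply Hurewicz to extract the $\sigma$-compact decomposition. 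In either route, group homogeneity is what makes the argument go; no analogous step works for a general Tychonoff $X$.
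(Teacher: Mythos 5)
First, a point of order: this paper does not prove the statement at all --- it is quoted verbatim from \cite{menger-remainders} as background --- so there is no proof here to compare your route against; your proposal has to stand on its own. Its opening reductions are fine: Hurewicz implies Lindel\"of, so Henriksen--Isbell gives that $G$ is of countable type, hence feathered, and the structure theorem for feathered groups (Arhangel'skii--Tkachenko) does produce a compact subgroup $H$ with $G/H$ metrizable and the quotient map perfect; a perfect map sends the remainder onto the remainder, so $\beta(G/H)\setminus(G/H)$ is Hurewicz, and \v Cech-completeness does pull back along perfect maps. Up to that point the plan is sound.

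The fatal gap is the black box you then invoke: the ``metric-space version'' --- that a metrizable $M$ with $\beta M\setminus M$ Hurewicz must be \v Cech-complete --- is false. In ZFC there is a Hurewicz subset $Y$ of $2^\omega$ that is not $\sigma$-compact (this goes back to \cite{COC2}; direct constructions were later given by Chaber--Pol and by Tsaban--Zdomskyy). Place such a $Y$ inside a nowhere dense copy of $2^\omega$ in $2^\omega$ and set $M=2^\omega\setminus Y$. Then $M$ is dense in $2^\omega$, so $2^\omega$ is a compactification of the separable metrizable space $M$ whose remainder is $Y$: Hurewicz but not $\sigma$-compact. Since having a Hurewicz (respectively, $\sigma$-compact) remainder does not depend on the chosen compactification --- the same perfect-map argument behind Lemma~\ref{simplecriterion} works for Hurewicz --- this $M$ is Hurewicz at infinity but not \v Cech-complete. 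So the statement you need for the metrizable quotient cannot be a fact about bare metrizable spaces; it is exactly at this step that the group-theoretic (or at least homogeneity-plus-more) structure must enter, and your argument never uses it. Note also that $G/H$ is in general only a coset space, not a group, and that a ``metrizable group version'' would itself be a special case of the theorem being proved, so it cannot be cited as known without proof. The back-up ``direct construction'' in your last paragraph is only a statement of intent --- it never specifies how the covers $\mathcal U_n$ are built nor where homogeneity is used --- so it does not close this gap either.
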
 

\begin{thm}\cite[Theorem 1.4]{menger-remainders}
There exists a topological group $G$ such that $\beta G\setminus G$ is Scheepers and not $\sigma$-compact if and only if there exists an ultrafilter $\mathcal{U}$ on $\omega$ such that, considered as a subspace of $\pow(\omega)$ with the Cantor set topology, $\mathcal{U}$ is Scheepers.
\end{thm}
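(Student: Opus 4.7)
The statement is an ``if and only if'', so I need two constructions connecting ultrafilters on $\omega$ (viewed as subspaces of $\pow(\omega)$ with the Cantor topology) with remainders of topological groups. In both directions the dictionary is that points of $\beta G\setminus G$ can be coded by ultrafilters on a countable structure inside $G$, and the Scheepers property should transfer through this coding in a controlled way.

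For the direction starting from a Scheepers ultrafilter $\mathcal{U}$, I would build $G$ as a Boolean topological group. A natural candidate is $G=[\omega]^{<\omega}$ with symmetric difference, topologised by declaring the subgroups $U_A=\{x\in G:\operatorname{supp}(x)\subset A\}$, for $A\in\mathcal{U}$, to form a neighbourhood base at $0$. These are subgroups, so the resulting topology is a group topology, and $\mathcal{U}$ embeds canonically into $\beta G\setminus G$. The task is then to show that $\beta G\setminus G$ maps onto a space controlled by $\mathcal{U}$ (for example a quotient factoring through the Stone space of $\mathcal{U}$), so that the Scheepers property transfers from $\mathcal{U}$ to the full remainder using preservation under continuous images, closed subspaces and countable unions. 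Non-$\sigma$-compactness of the remainder is inherited from non-$\sigma$-compactness of $\mathcal{U}$ via the same map.

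For the other direction, I would extract the ultrafilter from a countable skeleton of $G$. Fix a countable symmetric neighbourhood basis $\{V_n\}$ of the identity satisfying $V_{n+1}V_{n+1}^{-1}\subset V_n$, and a countable set $D\subset G$ that witnesses the asymptotic behaviour of $G$ at infinity well enough (for instance $D=\bigcup_{n}d_nV_n$ for suitably chosen $d_n$'s). Non-$\sigma$-compactness of the remainder gives a free point $p\in\beta G\setminus G$, whose trace on $D$ is an ultrafilter $\mathcal{U}$ on the countable set $D\cong\omega$. Using the group-theoretic regularity of $\{V_n\}$, any sequence of open covers of $\mathcal{U}\subset\pow(D)$ can then be translated back into a sequence of open covers of $\beta G\setminus G$, so that a Scheepers selection on the group side produces a Scheepers selection on the ultrafilter side.

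The main obstacle is the forward direction. Extracting some free filter is easy; extracting an \emph{ultrafilter} with the Scheepers property is where the work is. The translation between open covers of a subspace of $\pow(D)$ and open covers of $\beta G\setminus G$ is lossy, and one needs the group structure, via the Graev-type control coming from $V_{n+1}V_{n+1}^{-1}\subset V_n$, to realign them coherently across all $n$ at once. This is presumably where the Scheepers case diverges from the Hurewicz case, in which Theorem~1.3 cited above already forbids any non-$\sigma$-compact example, and so the argument must be tight enough to exploit that Scheepers is genuinely weaker than Hurewicz.
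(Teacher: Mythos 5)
First, note that the paper you are working from does not prove this statement at all: it is quoted verbatim from the cited article of Bella, Tokg\"oz and Zdomskyy, so there is no internal proof to compare with, and your sketch has to stand on its own. As it stands it does not: both implications are reduced to exactly the assertions that constitute the theorem. In the direction starting from a Scheepers ultrafilter $\mathcal{U}$, your choice of group (the Boolean group $[\omega]^{<\omega}$ with symmetric difference, topologized by the subgroups determined by members of $\mathcal{U}$) is indeed the standard object to use, but the transfer argument is stated with the preservation arrow pointing the wrong way. The Scheepers property passes \emph{forward} along continuous surjections; exhibiting a map from $\beta G\setminus G$ \emph{onto} a space controlled by $\mathcal{U}$ therefore says nothing about $\beta G\setminus G$ itself. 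What is needed is the opposite: a decomposition of the remainder into countably many closed pieces each of which is a continuous image of (or a closed subspace of a product of a compactum with) a space built from $\mathcal{U}$ -- precisely the kind of analysis the present paper carries out for the $C_p$ analogue via its Cantor-scheme lemmas and the identity $\cube\setminus C_\filt=(\cube\setminus K_\filt)\cup B(\cube)$. The same objection applies to your one-line claim that non-$\sigma$-compactness of the remainder is ``inherited via the same map''; that too needs an argument in the other direction (e.g.\ that $\sigma$-compactness of the remainder would force \v Cech-completeness of $G$ and hence an impossible structure on $\mathcal{U}$).

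In the converse direction the gap is even larger. Taking the trace of a point $p\in\beta G\setminus G$ on a countable set $D$ yields an ultrafilter only when $p$ lies in the closure of $D$, and, more importantly, the sentence ``any sequence of open covers of $\mathcal{U}\subset\pow(D)$ can then be translated back into a sequence of open covers of $\beta G\setminus G$'' is exactly the content of the theorem, not a step toward it: to conclude that $\mathcal{U}$ is Scheepers from the Scheepers property of the remainder you must realize $\mathcal{U}$ (as a subspace of the Cantor set) as a closed subspace or continuous image of the remainder, or produce an explicit cover-by-cover translation, and nothing in the sketch does this. Note also that your use of the hypothesis that $\beta G\setminus G$ is not $\sigma$-compact is only to obtain a ``free point,'' which any nonempty remainder provides; in the cited proof the non-$\sigma$-compactness (equivalently, failure of \v Cech-completeness of $G$) is exploited structurally, together with the theory of remainders of topological groups, to pin down a countable non-locally-compact skeleton on which the extracted ultrafilter genuinely reflects the covering properties of the remainder. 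So the proposal identifies plausible raw material but leaves both implications unproved.
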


The last section of \cite{menger-remainders} considers the specific case of the topological group $C_p(X)$ consisting of all continuous real-valued functions defined on $X$, with the topology of pointwise convergence. It is shown that if $C_p(X)$ is Menger at infinity, then it is first countable and hereditarily Baire. It is a well-known result that $C_p(X)$ is \v Cech-complete (equivalently, $\sigma$-compact at infinity) if and only if $X$ is countable and discrete (see \cite[I.3.3]{arkh-top_func_spaces}). So the authors of \cite{menger-remainders} made the natural conjecture that $C_p(X)$ is Menger at infinity if and only if $C_p(X)$ is $\sigma$-compact at infinity (\cite[Question 6.1]{menger-remainders}). Their conjecture is equivalent to the statement that if $C_p(X)$ is Menger at infinity, then $X$ is countable and discrete. In this paper we disprove this conjecture.

\begin{thm}\label{main}
It is consistent with ZFC that there exists a regular, countable, non-discrete space $X$ such that $C_p(X)$ is Menger at infinity.
\end{thm}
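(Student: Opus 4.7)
The plan is to start from a Menger ultrafilter $\mathcal{U}$ on $\omega$, whose existence is known to be consistent with ZFC, and to set $X=\omega\cup\{p\}$ with $p\notin\omega$, topologized by declaring every $n\in\omega$ isolated and taking $\{\{p\}\cup A : A\in\mathcal{U}\}$ as a neighborhood base at $p$. Countability and non-discreteness are immediate from the construction. Since $\mathcal{U}$ is an ultrafilter, a subset $C\subseteq\omega$ is closed precisely when $C\notin\mathcal{U}$, and this yields zero-dimensionality, hence regularity. This handles the easy structural part of the theorem.

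The substantial content is to check that $C_p(X)$ is Menger at infinity. First I would identify $C_p(X)$ with the subspace $\{g\in\mathbb{R}^\omega : \lim_\mathcal{U} g\in\mathbb{R}\}$ of $\mathbb{R}^\omega$, since continuity at $p$ is equivalent to $f(p)=\lim_\mathcal{U} f|_\omega$. A natural compact container is $K=[-\infty,\infty]^X$, inside which $C_p(X)$ is dense. Because the Menger property of a remainder passes along the perfect surjections between remainders of distinct compactifications, showing that $K\setminus C_p(X)$ is Menger is enough to conclude $\beta C_p(X)\setminus C_p(X)$ is Menger.

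The remainder $K\setminus C_p(X)$ splits into a $\sigma$-compact piece, corresponding to functions with $\pm\infty$ values at some point of $X$, and a more delicate piece: functions whose coordinates are all finite but whose ultrafilter limit on $\omega$ disagrees with the prescribed value at $p$. This second piece is where $\mathcal{U}$ truly enters. Given a sequence of open covers, the strategy is to recode each cover by a family of open sets in $2^\omega$ covering $\mathcal{U}$, apply the Menger selection to $\mathcal{U}$ to pick finitely many per cover, and lift back to finite subfamilies whose union covers the delicate piece.

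The hardest step will be making this recoding precise: an arbitrary open cover of the bad piece of the remainder does not manifestly correspond to an open cover of $\mathcal{U}\subseteq 2^\omega$. I expect to first refine each cover to a controlled form, using compactness in the $f(p)$ coordinate, a countable dense set of rational approximants, and a basis of basic open sets with cofinite support, so that each chosen basic neighborhood is associated to a subset of $\omega$ coding which coordinates it constrains. These subsets provide the link to open sets in $2^\omega$ meeting $\mathcal{U}$, and a diagonal argument using the Menger property of $\mathcal{U}$ then yields the finite selections covering the whole remainder.
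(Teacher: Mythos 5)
Your set-up is fine and parallels the paper's: take a Menger ultrafilter $\mathcal{U}$ (consistent, e.g.\ under $\mathfrak{d}=\mathfrak{c}$), let $X=\xi(\mathcal{U})$, identify $C_p(X)$ with $\{g:\lim_{\mathcal{U}}g\restriction\omega=g(p)\}$ inside a compact cube, split the remainder into a $\sigma$-compact part (functions taking an infinite value, resp.\ the pseudoboundary in the paper) and a ``bad'' part, and use the fact that one compactification with Menger remainder suffices. The problem is that the only genuinely hard step --- showing the bad part is Menger --- is left as a hope, and the recoding you envision is aimed at the wrong object. You propose to turn each open cover of the bad piece into an open cover of the single copy of $\mathcal{U}$ in $2^\omega$ and apply Menger selections there. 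But a point $g$ of the bad piece is not coded by one subset of $\omega$: it is coded by an $\omega$-indexed family of subsets of $\omega$ (the sublevel sets $\{n:\lvert g(n)-c\rvert<2^{-m}\}$ at all scales, or, in the paper's coding, one column of a set $A\subset\omega\times\omega$ per coordinate), only some of which witness failure of membership in $\mathcal{U}$. Moreover the assignments $g\mapsto\{n:\lvert g(n)-c\rvert<\varepsilon\}$ are not continuous, so there is no direct continuous reduction of the bad piece to $\pow(\omega)\setminus\mathcal{U}$; and Menger is not preserved by arbitrary preimages or subspaces, so a loose ``recode and lift back'' does not by itself produce the finite selections.

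What is missing is precisely the content of the paper's two lemmas. First, a Cantor-scheme construction of a \emph{continuous} surjection $\varphi:\pow(\omega\times\omega)\to\cube$ with $\varphi^{\leftarrow}[K_{\mathcal U}]=\mathcal{U}^{(\omega)}$, so that the bad part of the remainder becomes a continuous image of $\pow(\omega\times\omega)\setminus\mathcal{U}^{(\omega)}$, i.e.\ of $\bigl(\mathcal{U}^{(\omega)}\bigr)^{+}$; the scheme is exactly the device that replaces your non-continuous sublevel-set maps. Second, the fact that $\bigl(\mathcal{U}^{(\omega)}\bigr)^{+}$ is Menger whenever $\mathcal{U}^{+}=\mathcal{U}$ is: it is the countable union over $n$ of the sets $\{A:A_{(n)}\in\mathcal{U}^{+}\}$, each homeomorphic to $\mathcal{U}^{+}\times\pow(\omega)^{\omega}$, and one invokes preservation of Menger under products with compacta and under countable unions. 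Your ``diagonal argument using the Menger property of $\mathcal{U}$'' would have to reprove both of these facts simultaneously; as written it is essentially a restatement of what must be shown. So the proposal follows the paper's outer architecture but has a genuine gap at its core, and to close it you would need either the $\omega$-power reduction above or some equally concrete substitute.
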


Our proof of Theorem \ref{main} uses filters with a special property that is immediately satisfied by Menger ultrafilters. See Theorem \ref{actualthm} for the exact property we use.

According to \cite[Theorem 3.5]{chod-rep-zdo}, Menger filters are precisely those called Canjar filters. Also, by \cite[Proposition 2]{canjar1}, $\mathfrak{d}=\mathfrak{c}$ implies there is a Canjar (thus, Menger) ultrafilter. However, a characterization of Canjar ultrafilters given in \cite{canjar1} implies that a Menger ultrafilter is a $P$-point. Thus, Menger ultrafilters consistently do not exist.

 However, we do not know whether there are filters in ZFC that satisfy the conditions we need. See sections \ref{proofsection} and \ref{quessection}  for a more thorough explanation and concrete open questions.

\section{Preliminaries and notation}

\subsection{The Menger property}The Menger property has been thoroughly studied. We state some well-known facts below:
\begin{enumerate}[label={(\roman*)}]
\item \cite[Theorem 2.2]{COC2} Every $\sigma$-compact set is Menger.
\item \cite[Theorem 3.1]{COC2} If $X$ is Menger and $Y\subset X$ is closed, then $Y$ is Menger.
\item If $X$ is Menger and $K$ is $\sigma$-compact, then $X\times K$ is also Menger.
\item \cite[Theorem 3.1]{COC2} The continuous image of a Menger space is also Menger.
\item The continuous and perfect pre-image of a Menger space is also Menger.
\item \cite[p. 255]{COC2} If a space is the countable union of Menger spaces, then it is Menger as well.
\item $\omega\sp\omega$ is not Menger.
\end{enumerate}

We also mention the following observation of Aurichi and Bella.

\begin{lemma}\cite[Corollary 1.6]{aurichi-bella}\label{simplecriterion}
A space $X$ is Menger at infinity if and only if there exists a compactification of $X$ with a Menger remainder if and only if the remainder of every compactification of $X$ is Menger.
\end{lemma}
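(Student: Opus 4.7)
Since $\beta X$ is itself a compactification of $X$, the third condition trivially implies both the first and the second, while the first is the special case $bX=\beta X$ of the second. The remaining work is (b) $\Rightarrow$ (c): if one compactification of $X$ has a Menger remainder, so does every other.

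The plan is to pass through $\beta X$. For any compactification $bX$ of $X$, the universal property of the Stone--\v Cech compactification produces a unique continuous extension $\varphi_b\colon\beta X\to bX$ of the identity on $X$; since $\beta X$ is compact and $bX$ is Hausdorff, $\varphi_b$ is automatically a perfect surjection. I will then invoke the preservation properties (iv) and (v) from the preliminaries: Menger is preserved by continuous images and by perfect continuous preimages. In outline: if $bX\setminus X$ is Menger, the saturated subspace $\varphi_b^{-1}(bX\setminus X)\subseteq\beta X\setminus X$ is a perfect preimage of a Menger space and hence Menger; conversely, if $\beta X\setminus X$ is Menger, its continuous image $\varphi_b(\beta X\setminus X)$ is Menger, and $bX\setminus X$ sits inside it. Iterating through $\beta X$ transfers the Menger property between any two compactifications of $X$.

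The main obstacle is that $\varphi_b$ need not send $\beta X\setminus X$ into $bX\setminus X$: when $X$ fails to be locally compact, some points of $\beta X\setminus X$ can be collapsed by $\varphi_b$ onto points of $X$. Consequently $\varphi_b^{-1}(bX\setminus X)$ may be a proper subset of $\beta X\setminus X$, and $\varphi_b(\beta X\setminus X)$ may strictly contain $bX\setminus X$. The technical heart of the argument is therefore to control the ``collapsed'' set $\varphi_b^{-1}(X)\setminus X$. Using that each fibre $\varphi_b^{-1}(x)$ is compact, and in fact that $\varphi_b^{-1}(x)=\{x\}$ whenever $x$ is a point of local compactness of $X$ (an open subset of $X$), the difficulty reduces to the non-locally-compact points of $X$, and one verifies that the discrepancy between $\varphi_b^{-1}(bX\setminus X)$, $\beta X\setminus X$, and $\varphi_b(\beta X\setminus X)$ is small enough that the Menger property does transfer in both directions.
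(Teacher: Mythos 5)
Your overall strategy (route everything through $\beta X$, use the natural map $\varphi_b\colon\beta X\to bX$, and invoke preservation of the Menger property under continuous images and perfect preimages) is the standard one and is essentially how the cited result of Aurichi and Bella is obtained; the present paper only quotes it. But the step you call the ``technical heart'' is exactly where your argument stops: you assert that the discrepancy between $\varphi_b^{-1}(bX\setminus X)$, $\beta X\setminus X$ and $\varphi_b(\beta X\setminus X)$ can be ``verified to be small enough,'' and no such verification is given. This is a genuine gap, and it is not clear it could even be filled in the form you propose: if $\varphi_b(\beta X\setminus X)$ really could strictly contain $bX\setminus X$, then $bX\setminus X$ would merely be a (not necessarily closed) subspace of a Menger space, and the Menger property is not inherited by arbitrary subspaces, so ``smallness'' of the discrepancy would not by itself transfer anything.

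The good news is that the obstacle you describe does not exist: for \emph{any} Tychonoff $X$, locally compact or not, the natural map between two compactifications carries remainder onto remainder and no point of the remainder is collapsed onto a point of $X$; that is, $\varphi_b(\beta X\setminus X)=bX\setminus X$ and $\varphi_b^{-1}(x)=\{x\}$ for every $x\in X$ (Engelking, \emph{General Topology}, Theorem 3.5.7). The proof is short: if $y\in\beta X\setminus X$ had $\varphi_b(y)=x\in X$, pick an open $V\ni y$ in $\beta X$ with $x\notin\operatorname{cl}_{\beta X}V$; density of $X$ gives $y\in\operatorname{cl}_{\beta X}(X\cap V)$, continuity gives $x\in\operatorname{cl}_{bX}(X\cap V)\cap X=\operatorname{cl}_X(X\cap V)\subseteq\operatorname{cl}_{\beta X}V$, a contradiction. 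Consequently $\varphi_b^{-1}(bX\setminus X)=\beta X\setminus X$ exactly, the restriction $\varphi_b\restriction(\beta X\setminus X)\colon\beta X\setminus X\to bX\setminus X$ is a perfect surjection, and your intended application of properties (iv) and (v) finishes the proof with no residue: if some $bX\setminus X$ is Menger then $\beta X\setminus X$ is its perfect preimage, hence Menger; if $\beta X\setminus X$ is Menger then every $bX\setminus X$ is its continuous image, hence Menger. So replace the paragraph about controlling the ``collapsed set'' (whose partial remark about points of local compactness is true but beside the point) with this classical equality, and your argument becomes complete.
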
 

\subsection{Filters} A filter $\filt$ on a non-empty set $X$ is a subset $\filt\subset\pow(X)$ such that: (a) $\emptyset\notin\filt$, (b) if $x,y\in\filt$ then $x\cap y\in\filt$, and (c) if $x\in\filt$ and $x\subset y\subset X$, then $y\in\filt$.  All filters in this paper are defined on countable sets (and most of the times, on $\omega$). Filters that contain the Fr\'echet filter of cofinite sets are called free. Maximal filters are called ultrafilters. Let $\chi:\pow(\omega)\to \csf$ be the function that sends each subset to its characteristic function. Using $\xi$, a filter on a countable set can be thought of as a subspace of the Cantor set.

For every subset $\mathcal{Y}\subset\pow(X)$ we may define $\mathcal{Y}\sp\ast=\{A\subset X: X\setminus A\in\mathcal{Y}\}$. If $\filt$ is a filter on $\omega$, $\filt\sp\ast$ is called its dual ideal and $\filt\sp+=\pow(X)\setminus\filt\sp\ast$ is the set of $\filt$-positive sets. Moreover, the function that takes each set in $\pow(X)$ to its complement is a homeomorphism. Thus, a filter is always homeomorphic to its dual ideal. Also, notice that the complement $\pow(X)\setminus\filt$ is then homeomorphic to $\filt\sp+$.

Given $A\subset\omega\times\omega$ and $n\in\omega$, define
$$
\begin{array}{lrl}
A\sp{(n)}&=&\{i\in\omega: \pair{i,n}\in A\},\textrm{ and}\\
A_{(n)}&=&\{i\in\omega: \pair{n,i}\in A\}.
\end{array}
$$
For $\mathcal{Y}\subset\pow(\omega)$, let us define
$$
\mathcal{Y}\sp{(\omega)}=\{A\subset\omega\times\omega:\forall n\in\omega\ (A_{(n)}\in\mathcal{Y}))\}.
$$
There is a natural function from $\pow(\omega\times\omega)$ to $\pow(\omega)\sp\omega$  that takes each $A\subset\omega\times\omega$ to $\{\pair{n,A_{(n)}}:n\in\omega\}$. This function is also a homeomorphism and takes $\mathcal{Y}\sp{(\omega)}$ to $\mathcal{Y}\sp\omega$.

It is easy to see that if $\filt$ is a filter on $\omega$, then $\filt\sp{(\omega)}$ is a filter on $\omega\times\omega$. Thus, the $\omega$-power of a filter is always (homeomorphic to) a filter.

A filter $\filt$ is a \emph{$P$-filter} if for every $\{F_n:n<\omega\}\subset\filt$ there exists $F\in\filt$ such that $F\setminus F_n$ is finite for all $n\in\omega$. A filter is called \emph{meager} if it is meager as a topological space. It is known that ultrafilters are non-meager \cite[Theorem 4.1.1]{bart}. A $P$-point is a $P$-filter that is also an ultrafilter. The existence of $P$-points is independent from ZFC. For example, $\mathfrak{d}=\mathfrak{c}$ implies there are $P$-points but there are models with no $P$-points, see \cite[section 4.4]{bart}.   Non-meager $P$-filters are a natural generalization of $P$-points; it is still an open question whether they exist in ZFC but if they don't exist, then there is an inner model with a large cardinal, see \cite[section 4.4.C]{bart}.

\subsection{The Hilbert cube} The \emph{Hilbert cube} is the countable infinite product of closed intervals of the reals, we will find it convenient to work with $\cube=[-1,1]\sp\omega$. The pseudointerior of $\cube$ is $\cubein=(-1,1)\sp\omega$ and the pseudoboundary is $B(\cube)=\cube\setminus \cubein$.

\section{The example}\label{proofsection}

According to \cite[Proposition 6.2]{menger-remainders}, if $C_p(X)$ is Menger at infinity, then it is first countable and hereditarily Baire. From \cite[I.1.1]{arkh-top_func_spaces}, it follows that $X$ is countable. In \cite{marciszewski}, Witold Marciszewski studied those countable spaces $X$ such that $C_p(X)$ is hereditarily Baire. We will consider one specific case: when $X$ has a unique non-isolated point. 

Given a filter $\filt\subset\pow(\omega)$, consider the space $\xi(\filt)=\omega\cup\{\filt\}$, where every point of $\omega$ is isolated and every neighborhood of $\filt$ is of the form $\{\filt\}\cup A$ with $A\in\filt$. All of our filters will be \emph{free}, that is, they contain the Fr\'echet filter. In this case, $\filt$ is not isolated. When $\filt$ is the Fr\'echet filter, $\xi(\filt)$ is homeomorphic to a convergent sequence. It is easy to see that a space $X$ is homeomorphic to a space of the form $\xi(\filt)$ if and only if $X$ is a countable space with a unique non-isolated point.

\begin{thm}\cite{marciszewski}
For a free filter $\filt$ on $\omega$, the following are equivalent.
\begin{enumerate}[label=(\alph*)]
\item $\filt$ is a non-meager $P$-filter,
\item $\filt$ is hereditarily Baire, and
\item $C_p(\xi(\filt))$ is hereditarily Baire.
\end{enumerate}
\end{thm}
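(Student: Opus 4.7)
The proof divides naturally into three implications; the substantive one is (a)$\Rightarrow$(c), while the other two are more combinatorial or topological.

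\textbf{The easy direction (c)$\Rightarrow$(b).} For each $A\in\filt$, extend the characteristic function $\chi_A$ on $\omega$ to $\xi(\filt)$ by setting $\chi_A(\filt)=1$; continuity at $\filt$ is exactly the statement $A\in\filt$, since a basic neighborhood of $\filt$ is $\{\filt\}\cup B$ with $B\in\filt$, and $\chi_A$ sends it into a small neighborhood of $1$ precisely when $B\subseteq A$. The assignment $A\mapsto\chi_A$ is then a homeomorphism of $\filt$ onto the subspace $\{f\in C_p(\xi(\filt)) : f(\xi(\filt))\subseteq\{0,1\},\ f(\filt)=1\}$ of $C_p(\xi(\filt))$, which is closed because point evaluations are continuous. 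Hereditary Baireness passes to closed subspaces, so (c) implies (b).

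\textbf{The equivalence (a)$\Leftrightarrow$(b).} I would invoke the classical fact that a separable metrizable space is hereditarily Baire if and only if it contains no closed copy of $\mathbb{Q}$. For $\neg$(a)$\Rightarrow\neg$(b), split into two cases. If $\filt$ is meager, Talagrand's theorem supplies a partition of $\omega$ into finite intervals $(I_n)$ such that every $F\in\filt$ is disjoint from infinitely many $I_n$; this lets one build a countable dense-in-itself family inside $\filt$, indexed by a tree on $\omega$, whose $\csf$-closure meets $\filt$ only in the family itself and hence is a closed copy of $\mathbb{Q}$. If $\filt$ is not a $P$-filter, take a decreasing $\{F_n\}\subseteq\filt$ with no pseudo-intersection in $\filt$ and perform the analogous tree construction using finite modifications of the $F_n$'s, with limit points of the tree landing outside $\filt$. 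Conversely, if $\filt$ is a non-meager $P$-filter and $C=\{A_n\}\subseteq\filt$ is closed and dense-in-itself, the $P$-filter property yields a pseudo-intersection of $\{A_n\}$ and a non-meagerness argument promotes it (or a slight modification) to a genuine $\csf$-accumulation point of $C$ lying in $\filt$, contradicting that $C$ is closed in $\filt$.

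\textbf{The implication (a)$\Rightarrow$(c), the main obstacle.} The map $f\mapsto (f-f(\filt),f(\filt))$ is a homeomorphism of $C_p(\xi(\filt))$ onto $N\times\mathbb{R}$, where $N=\{(x_n)\in\mathbb{R}\sp\omega : (x_n)\to 0 \textrm{ along }\filt\}$. Since $\mathbb{R}$ is Polish, it suffices to show that $N$ is hereditarily Baire. Arguing by contradiction, suppose $C\subseteq N$ is a closed copy of $\mathbb{Q}$. To each $x\in C$ I would associate the decreasing chain of level sets $L_k(x)=\{n : |x_n|<2\sp{-k}\}\in\filt$ and apply the $P$-filter assumption (available by (a)$\Leftrightarrow$(b)) to produce $F_x\in\filt$ with $F_x\setminus L_k(x)$ finite for every $k$. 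The goal is to convert $\{F_x : x\in C\}$ into a closed copy of $\mathbb{Q}$ inside $\filt$, which would contradict (b) via the first equivalence. The main difficulty is making this transfer \emph{topologically faithful}: the $F_x$ must be selected coherently across the uncountably many scales $k$ so that convergences in $N$ descend to pointwise convergences in $\csf$ without collapsing distinct $x$'s, and a second appeal to non-meagerness seems necessary to block spurious $\csf$-accumulation of $\{F_x : x\in C\}$ outside $\filt$.
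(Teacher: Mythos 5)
This theorem is not proved in the paper at all --- it is quoted from Marciszewski's article --- so your attempt has to stand on its own, and as it stands it has a genuine gap in exactly the places that carry the weight of the theorem. Your (c)$\Rightarrow$(b) argument is correct and complete: $A\mapsto\chi_A$ with $\chi_A(\filt)=1$ is a homeomorphism of $\filt$ onto a pointwise-closed subset of $C_p(\xi(\filt))$, and hereditary Baireness is closed-hereditary. Your sketch of $\neg$(a)$\Rightarrow\neg$(b) (Talagrand's interval characterization of meagerness, and a tree construction when the $P$-property fails) is also the right idea; it is essentially Marciszewski's Lemmas 2.1--2.2, which the present paper invokes in Section 4. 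But the two hard implications are not proved. For (a)$\Rightarrow$(b) you say that given a closed dense-in-itself $C=\{A_n\}\subseteq\filt$, a pseudo-intersection plus ``a non-meagerness argument'' produces a $\csf$-accumulation point of $C$ inside $\filt$; no such argument is given, and it is not routine --- a pseudo-intersection of the $A_n$'s has no a priori relation to the topological accumulation points of $C$, and this direction is precisely the substantial half of the filter case of the theorem.

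The gap is most serious in (a)$\Rightarrow$(c), which you yourself flag as unfinished. The strategy of taking a closed copy $C$ of $\mathbb{Q}$ in $N=\{x\in\mathbb{R}\sp\omega : x\to 0\text{ along }\filt\}$, choosing for each $x\in C$ a pseudo-intersection $F_x\in\filt$ of its level sets, and hoping that $\{F_x:x\in C\}$ is a closed copy of $\mathbb{Q}$ in $\filt$ does not work as stated: the assignment $x\mapsto F_x$ is completely non-canonical, with no continuity or injectivity (distinct $x$'s may receive the same $F_x$, or the image may fail to be dense-in-itself), and nothing prevents $\{F_x:x\in C\}$ from having accumulation points in $\filt$ even though $C$ is closed in $N$; so no contradiction with (b) is forthcoming. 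This is not a detail to be patched but the core of the theorem, and the published proof proceeds along different lines (analyzing the function space directly via the non-meager $P$-filter hypothesis rather than transferring a copy of $\mathbb{Q}$ back into $\filt$). A smaller unjustified step: reducing (c) to hereditary Baireness of $N$ via $C_p(\xi(\filt))\cong N\times\mathbb{R}$ uses that the product of a metrizable hereditarily Baire space with a Polish space is hereditarily Baire, which is true but needs a citation or proof; and the ``scales'' $k$ are countable, not uncountable. In summary: the easy direction is fine, but both (a)$\Rightarrow$(b) and (a)$\Rightarrow$(c) remain unproved, so the proposal does not establish the theorem.
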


Thus, it is natural to ask when $C_p(\xi(\filt))$ is Menger at infinity. By Lemma \ref{simplecriterion} it is sufficient to look at any compactification of $C_p(\xi(\filt))$ and try to decide whether the remainder is Menger.

Consider the set of functions in the Hilbert cube that $\filt$-converge to $0$:
$$
K_\filt=\{f\in\cube:\forall m\in\omega\ \{n\in\omega:\lvert f(n)\rvert<2\sp{-m}\}\in\filt\},
$$
and those that only take values in the pseudointerior
$$
C_\filt=K_\filt\cap\cubein.
$$

By \cite[Lemma 2.1]{marciszewski-analytic_coanalytic}, it easily follows that $C_\filt$ is homeomorphic to $C_p(\xi(\filt))$. Also, $C_\filt$ is dense in $\cube$. So our problem becomes equivalent to finding a filter $\filt$ such that $\cube\setminus C_\filt$ is Menger. In fact, we will prove the following characterization of those filters.

\begin{thm}\label{actualthm}
Let $\filt$ be a free filter on $\omega$. Then $C_p(\xi(\filt))$ is Menger at infinity if and only if $\filt\sp\plus$ is Menger.
\end{thm}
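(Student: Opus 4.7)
The plan is to invoke Lemma~\ref{simplecriterion} with $\cube$ as a compactification of the dense subspace $C_\filt\cong C_p(\xi(\filt))$, which reduces the statement to showing that $\cube\setminus C_\filt$ is Menger if and only if $\filt^+$ is Menger. Both implications will follow from the closure properties (ii)--(vi) of the preliminaries together with two concrete constructions.

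For the forward direction I would realise $\filt^+$ as a closed subspace of $\cube\setminus C_\filt$. Define $\phi\colon\pow(\omega)\to\cube$ by $\phi(A)(n)=1-1/(n+3)$ if $n\in A$ and $\phi(A)(n)=1/(n+3)$ if $n\notin A$. Then $\phi$ is a continuous injection of the compact space $\pow(\omega)$ (the two possible values of $\phi(A)(n)$ are separated by $1/2$), hence a closed embedding, and $\phi(A)\in\cubein$ for every $A$ since all coordinates lie in $(0,1)$. A short calculation of $\{n:|\phi(A)(n)|<2^{-m}\}$, together with the fact that $\filt$ contains the Fr\'echet filter, gives $\phi(A)\in K_\filt\iff\omega\setminus A\in\filt\iff A\in\filt^{\ast}$, hence $\phi(A)\in C_\filt\iff A\in\filt^{\ast}$. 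Therefore $\phi(\filt^+)=\phi(\pow(\omega))\cap(\cube\setminus C_\filt)$ is closed in $\cube\setminus C_\filt$ and homeomorphic to $\filt^+$, and (ii) delivers that $\filt^+$ is Menger.

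For the backward direction, decompose
$$
\cube\setminus C_\filt=B(\cube)\cup(\cube\setminus K_\filt).
$$
The pseudoboundary $B(\cube)=\bigcup_n\{f\in\cube:|f(n)|=1\}$ is $\sigma$-compact and therefore Menger. Write $\cube\setminus K_\filt=\bigcup_m X_m$ where $X_m=\{f\in\cube:\{n:|f(n)|\ge 2^{-m}\}\in\filt^+\}$; by (vi) it suffices to check that each $X_m$ is Menger. Since the level-set map $f\mapsto\{n:|f(n)|\ge 2^{-m}\}$ is not continuous, I introduce the certificate space
$$
Y_m=\{(A,f)\in\filt^+\times\cube:\forall n\in A,\ |f(n)|\ge 2^{-m}\},
$$
which is closed in $\filt^+\times\cube$ because its complement is a union over $n$ of products of a clopen subset of $\filt^+$ with an open subset of $\cube$. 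By (iii), $\filt^+\times\cube$ is Menger (since $\cube$ is $\sigma$-compact), so $Y_m$ is Menger by (ii). The continuous projection $(A,f)\mapsto f$ surjects $Y_m$ onto $X_m$ -- given $f\in X_m$, take $A=\{n:|f(n)|\ge 2^{-m}\}\in\filt^+$ as the witness, using that $\filt^+$ is upward closed -- so (iv) makes $X_m$ Menger, and another application of (vi) completes the decomposition.

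The real content lies in the two constructions. For the forward direction the naïve choice $\phi(A)=\chi_A$ fails because $\chi_A\notin\cubein$ whenever $A\neq\emptyset$; one must therefore bend the coordinates strictly into $(-1,1)$ while still making the $K_\filt$-condition detect membership in $\filt^{\ast}$. For the backward direction the obstruction is the discontinuity of the level-set assignment, and the certificate space $Y_m$ is designed precisely to replace that discontinuous map by a continuous projection from a Menger space obtained for free from $\filt^+\times\cube$.
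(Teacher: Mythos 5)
Your proof is correct, but it follows a genuinely different route from the paper. The paper passes through the countable power filter $\filt\sp{(\omega)}$ on $\omega\times\omega$: it builds two Cantor schemes, one giving a closed embedding of $\filt\sp{(\omega)}$ into $C_\filt$ (Lemma \ref{lemma1}), the other a continuous surjection $\pow(\omega\times\omega)\to\cube$ pulling $K_\filt$ back exactly to $\filt\sp{(\omega)}$ (Lemma \ref{lemma2}), and then needs Lemma \ref{lemma3} to translate Mengerness of $\left(\filt\sp{(\omega)}\right)\sp+$ into Mengerness of $\filt\sp+$. You avoid the power filter altogether. In the forward direction your map $\phi$ embeds all of $\pow(\omega)$ into $\cubein$ so that $\phi(A)\in K_\filt$ exactly when $A\in\filt\sp\ast$, which exhibits $\filt\sp+$ itself as a closed subspace of $\cube\setminus C_\filt$ in one step, replacing Lemma \ref{lemma1}, the closure argument, and Lemma \ref{lemma3}. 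In the backward direction, instead of a surjection of $\pow(\omega\times\omega)\setminus\filt\sp{(\omega)}$ onto $\cube\setminus K_\filt$, you decompose $\cube\setminus K_\filt=\bigcup_m X_m$ and realize each $X_m$ as a continuous image of the closed ``certificate'' set $Y_m\subset\filt\sp+\times\cube$, using only the standard preservation facts (closed subspaces, products with compacta, continuous images, countable unions); the verification that the projection of $Y_m$ is exactly $X_m$ via upward closure of $\filt\sp+$ is the right point and you make it. What the paper's longer route buys is the embedding result of Lemma \ref{lemma1} (of independent interest, since it shows $\filt\sp{(\omega)}$ sits as a closed copy inside $C_p(\xi(\filt))$) and the equivalence of Menger-at-infinity for $\filt$ and $\filt\sp{(\omega)}$; what yours buys is economy, as no Cantor schemes or power-filter bookkeeping are needed. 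One trivial slip: the two possible values of $\phi(A)(n)$ are separated by $1-2/(n+3)$, which is $1/3$ at $n=0$ rather than $1/2$; this is harmless, since continuity plus injectivity on the compact space $\pow(\omega)$ already make $\phi$ a closed embedding.
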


Recall that since $\filt\sp\plus$ is homeomorphic to $\pow(\omega)\setminus \filt$, the property in Theorem \ref{actualthm} is equivalent to saying that $\filt$ is Menger at infinity. So the problem is reduced to the existence of such filters. As discussed in the introduction, Menger ultrafilters have the desired properties. Indeed, an ultrafilter coincides with its set of positive sets. Thus, we conclude the following.

\begin{coro}
If $\filt$ is a Menger ultrafilter on $\omega$, then $C_p(\xi(\filt))$ is Menger at infinity.
\end{coro}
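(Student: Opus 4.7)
The plan is to apply Lemma~\ref{simplecriterion} with the Hilbert cube $\cube$ as a compactification of $C_p(\xi(\filt)) \cong C_\filt$, reducing the theorem to the equivalence ``$\cube \setminus C_\filt$ is Menger if and only if $\filt^+$ is Menger''. From $C_\filt = K_\filt \cap \cubein$, de Morgan gives the basic decomposition
\begin{equation*}
\cube \setminus C_\filt \;=\; (\cube \setminus K_\filt)\,\cup\,B(\cube),
\end{equation*}
and I will analyze the two summands separately.

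For the forward implication I will realize $\filt^+$ as a closed subspace of $\cube \setminus C_\filt$ via the map $\iota:\pow(\omega)\to\cube$, $\iota(A) = \tfrac{1}{2}\chi_A$. Its image $\iota(\pow(\omega)) = \{0,1/2\}^\omega$ is closed in $\cube$, and since each $\iota(A)$ automatically lies in $\cubein$, a direct check shows $\iota(A) \in K_\filt$ precisely when $\omega\setminus A\in\filt$, equivalently $A\in\filt^*$. Therefore $\iota(\filt^+) = \iota(\pow(\omega))\cap(\cube\setminus C_\filt)$ is closed in $\cube\setminus C_\filt$, and $\iota$ restricts to a homeomorphism $\filt^+\to\iota(\filt^+)$. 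If $\cube\setminus C_\filt$ is Menger, so is $\filt^+$ by~(ii).

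For the reverse implication, assume $\filt^+$ is Menger. The pseudoboundary $B(\cube)=\bigcup_{n,\,\epsilon\in\{-1,1\}}\{f:f(n)=\epsilon\}$ is a countable union of compact sets, so $\sigma$-compact and Menger by~(i). I handle the remaining summand by writing
\begin{equation*}
\cube \setminus K_\filt \;=\; \bigcup_{m\in\omega} L_m,\qquad L_m = \{f\in\cube : \{n:|f(n)|\geq 2^{-m}\}\in\filt^+\},
\end{equation*}
and showing each $L_m$ is Menger. The obstacle is that the natural parameterization $L_m\to\filt^+$, $f\mapsto\{n:|f(n)|\geq 2^{-m}\}$, is only Borel, with jumps at $|f(n)|=2^{-m}$. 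I circumvent this by passing to the graph-like auxiliary space
\begin{equation*}
\tilde L_m \;=\; \{(A,f)\in\filt^+\times\cube : A\subseteq\{n:|f(n)|\geq 2^{-m}\}\},
\end{equation*}
which is closed in $\filt^+\times\cube$ because it is the intersection over $n\in\omega$ of the closed sets $\{(A,f):n\notin A\}\cup\{(A,f):|f(n)|\geq 2^{-m}\}$. Since $\filt^+$ is Menger and $\cube$ is compact, $\filt^+\times\cube$ is Menger by~(iii), and hence so is $\tilde L_m$ by~(ii). The projection $(A,f)\mapsto f$ is continuous, lands in $L_m$ thanks to the upward closure of $\filt^+$, and is surjective onto $L_m$ by taking $A=\{n:|f(n)|\geq 2^{-m}\}$ for each $f\in L_m$. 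Thus $L_m$ is Menger by~(iv), $\cube\setminus K_\filt = \bigcup_m L_m$ is Menger by~(vi), and combining once more with $B(\cube)$ via~(vi) gives $\cube\setminus C_\filt$ Menger.

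The conceptual heart, and the only nontrivial step, is the substitution of the discontinuous map $L_m\to\filt^+$ by the projection from the closed subspace $\tilde L_m\subseteq\filt^+\times\cube$; upward closure of $\filt^+$ is precisely what makes this projection stay inside $L_m$.
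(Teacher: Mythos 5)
Your argument is correct, but it takes a genuinely different route from the paper. The paper obtains this corollary as an immediate consequence of Theorem~\ref{actualthm}, whose proof passes through the power filter $\filt^{(\omega)}$ on $\omega\times\omega$: one Cantor scheme gives a closed embedding of $\filt^{(\omega)}$ into $C_\filt$ (Lemma~\ref{lemma1}), a second Cantor scheme gives a continuous surjection of $\pow(\omega\times\omega)$ onto $\cube$ pulling $K_\filt$ back exactly to $\filt^{(\omega)}$ (Lemma~\ref{lemma2}), and Lemma~\ref{lemma3} transfers the Menger property between $(\filt^{(\omega)})^+$ and $\filt^+$. You bypass $\filt^{(\omega)}$ and the Cantor schemes entirely. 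For the implication actually needed here you write $\cube\setminus K_\filt=\bigcup_m L_m$ and replace the discontinuous assignment $f\mapsto\{n:|f(n)|\geq 2^{-m}\}$ by the projection of the closed set $\tilde{L}_m\subset\filt^+\times\cube$; this is sound, and the upward closedness of $\filt^+$ is indeed exactly what makes the projection land in, and cover, $L_m$. Your converse direction, via $A\mapsto\frac12\chi_A$ onto $\{0,1/2\}^\omega$, is likewise correct and is in fact more direct than the paper's route (closed copy of $\filt^{(\omega)}$ plus the diagonal argument of Lemma~\ref{lemma3}); it is not needed for the corollary, but it means you have in effect reproved both directions of Theorem~\ref{actualthm} by more elementary means, using only the standard preservation facts (i)--(vi). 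What each approach buys: the paper's lemmas isolate statements about $\filt^{(\omega)}$ that are of independent interest, while your argument is shorter and stays entirely inside $\cube$ and $\pow(\omega)$. One small point to add to close the corollary itself: since $\filt$ is an ultrafilter, $\filt^+=\filt$, so the hypothesis that $\filt$ is Menger is what feeds your reverse implication; the paper makes this one-line observation explicitly, and your write-up never does.
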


Now we proceed to the proof of Theorem \ref{actualthm}. The argument is based on two classical theorems that relate the Cantor set and the unit interval: $[0,1]$ has a subspace homeomorphic to $\csf$ and is a continuous image of $\csf$. We just have to take the filter into account and the proof will follow naturally.

A family of closed, non-empty subsets $\{J_s:s\in 2\sp{<\omega}\}$ of a space $X$ will be called a \emph{Cantor scheme} on $X$ if 
 \begin{enumerate}[label=(\roman*)]
  \item for every $s\in2\sp{<\omega}$, $J_s\supset J_{s\sp\frown0}\cup J_{s\sp\frown1}$, and
  \item for every $f\in2\sp\omega$, $J_f=\bigcap\{J_{f\restriction_n}:n<\omega\}$ is exactly one point.
 \end{enumerate}
Let $\id=\omega\times\{1\}$ and $\sigma=\{\id\!\!\restriction_n:n\in\omega\}$.
 
 \begin{lemma}\label{lemma1}
For every free filter $\filt$ on $\omega$ there is a closed embedding of $\filt\sp{(\omega)}$ into $C_p(\xi(\filt))$. 
\end{lemma}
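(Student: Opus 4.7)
The plan is to construct an explicit map $e \colon \filt\sp{(\omega)} \to C_p(\xi(\filt))$ based on a ternary Cantor embedding of $\csf$ into $[0,1/2]$. Set $\phi \colon \csf \to [0,1/2]$ by $\phi(b) = \sum_{k\in\omega} b(k)\,3\sp{-k-1}$; this is a topological embedding coming from a Cantor scheme on $[0,1/2]$, and satisfies the key quantitative equivalence $\phi(b) < 3\sp{-K} \iff b(k)=0$ for every $k<K$. For $A \in \filt\sp{(\omega)}$ and $m \in \omega$, let $g_m\sp A \in \csf$ be defined by $g_m\sp A(k)=1$ iff $m \notin A_{(k)}$, and set $e(A)(\filt) = 0$ and $e(A)(m) = \phi(g_m\sp A)$. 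The verification that $e(A) \in C_p(\xi(\filt))$ reduces to $\filt$-convergence at $\filt$: by the equivalence above, $\{m : e(A)(m) < 3\sp{-K}\} = \bigcap_{k<K} A_{(k)}$, a finite intersection of members of $\filt$ and hence itself in $\filt$.

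The map $e$ is a topological embedding. If $A_\alpha \to A$ in $\filt\sp{(\omega)}$ then every bit $[\pair{k,m} \in A_\alpha]$ eventually stabilizes, so $g_m\sp{A_\alpha} \to g_m\sp A$ in $\csf$ for each $m$, and continuity of $\phi$ yields $e(A_\alpha)(m) \to e(A)(m)$, i.e., $e(A_\alpha) \to e(A)$ pointwise. The converse direction, that pointwise convergence $e(A_\alpha)(m) \to e(A)(m)$ in $\mathbb{R}$ forces each bit to stabilize, uses continuity of $\phi\sp{-1}$ on the closed Cantor subset $\phi(\csf)$ and yields $A_\alpha \to A$ in $\filt\sp{(\omega)}$; injectivity is the same argument in disguise.

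The main obstacle is closedness of the image. I would prove $e(\filt\sp{(\omega)})$ equals the manifestly closed subset $E = \{ f \in C_p(\xi(\filt)) : f(\filt) = 0 \text{ and } f(m) \in \phi(\csf) \text{ for every } m \in \omega \}$ of $C_p(\xi(\filt))$, closed because both defining conditions are pointwise constraints with closed target in $\mathbb{R}$. The inclusion $e(\filt\sp{(\omega)}) \subseteq E$ is immediate from the construction. For the converse, given $f \in E$ define $A = \{ \pair{k,m} : \phi\sp{-1}(f(m))(k) = 0 \}$, so that $e(A) = f$ automatically; what must be verified is $A \in \filt\sp{(\omega)}$. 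The quantitative property of $\phi$ again gives $\{m : f(m) < 3\sp{-K}\} = \bigcap_{k<K} A_{(k)}$; continuity of $f$ at $\filt$ places the left side in $\filt$, and upward closure of $\filt$ forces each $A_{(k)} \in \filt$. This recovery of filter membership of every row of $A$ from $\filt$-convergence at the single accumulation point $\filt$ is the delicate interaction that makes the encoding land inside $\filt\sp{(\omega)}$ rather than in some larger subset of $\pow(\omega\times\omega)$.
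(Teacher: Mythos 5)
Your proof is correct, and at its core it is the same coding idea as the paper's: the $k$-th binary digit of the value at $m$ records whether $m\in A_{(k)}$, arranged so that ``$f(m)$ small'' is equivalent to ``$m$ lies in an initial intersection of rows,'' which is exactly what makes $\filt$-convergence at the non-isolated point correspond to $A\in\filt\sp{(\omega)}$; your explicit ternary map $\phi$ plays the role of the paper's abstract Cantor scheme with $0\in J_s$ along the distinguished branch. The one structural difference is how closedness of the image is obtained. The paper defines its map on all of the compact space $\pow(\omega\times\omega)$, shows the preimage of $C_\filt$ (inside the Hilbert cube model $C_\filt\subset\cube$ of $C_p(\xi(\filt))$) is exactly $\filt\sp{(\omega)}$, and gets closedness for free from compactness of the extended image; you instead work intrinsically in $C_p(\xi(\filt))$ and identify the image outright as the set $E$ of continuous functions vanishing at $\filt$ with all values in the closed set $\phi[\csf]$, recovering $A\in\filt\sp{(\omega)}$ from continuity at $\filt$ plus upward closure of the filter. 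Both mechanisms are sound; yours avoids invoking the identification $C_p(\xi(\filt))\cong C_\filt$ from Marciszewski's lemma and makes the closed image completely explicit, at the cost of having to verify by hand the inverse-continuity and image-characterization steps that the paper absorbs into the compactness of $\pow(\omega\times\omega)$.
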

\begin{proof}
As explained earlier we shall work on $C_\filt\subset\cube$ instead of the function space. Recursively, construct a Cantor scheme $\{J_s:s\in2\sp{<\omega}\}$ in the interval $(-1,1)$ such that:
\begin{enumerate}[label=(\roman*)]
\item $J_\emptyset=[-\frac{1}{2},0]$,
\item for every $s\in 2\sp{<\omega}$, $J_s$ is a non-degenerate closed interval of length $<2\sp{-\lvert s\rvert}$,
\item for every $s\in 2\sp{<\omega}$, $J_{s\sp\frown 0}\cap J_{s\sp\frown 1}=\emptyset$, and
\item for every $s\in\sigma$, $0\in J_s$.
\end{enumerate}

Now define the function $\varphi:\pow(\omega\times\omega)\to\cube$ such that for all $A\subset\omega\times\omega$ and $n\in\omega$, $\varphi(A)(n)$ is the unique point in $J_{\chi(A\sp{(n)})}$. So informally speaking, the $n$-th row of $A$ is used to define the value of the function $\varphi(A):\omega\to 2$ at $n$.

From standard arguments, it is easy to see that $\varphi$ is an embedding. Now we shall prove that $A\in\filt\sp{(\omega)}$ if and only if $\varphi(A)\in K_\filt$.

First, assume that $A\in\filt\sp{(\omega)}$, that is, $A_{(i)}\in\filt$ for all $i\in\omega$. Let $m\in\omega$. By the definition of $\varphi(A)$, for every $n\in\bigcap\{A_{(i)}:i\leq m\}$ we have that $\varphi(A)(n)\in J_{\id\!\restriction m}$. Since $J_{\id\!\restriction m}$ has diameter less than $2\sp{-m}$ and contains $0$, we obtain that
$$
\bigcap\{A_{(i)}:i\leq m\}\subset\{n\in\omega:\lvert \varphi(A)(n)\rvert<2\sp{-m}\}.
$$
Thus the set $\{n\in\omega:\lvert \varphi(A)(n)\rvert<2\sp{-m}\}$ is an element of $\filt$. Since this holds for every $m<\omega$, we obtain that $\varphi(A)$ is $\filt$-convergent to $0$.

Now assume that $\varphi(A)\in K_\filt$ and fix $m\in\omega$. Let $k\in\omega$ be such that the length of $J_{\varphi\!\restriction m}$ is greater than $2\sp{-k}$. By our hypothesis the set $\{n\in\omega:\lvert \varphi(A)(n)\rvert<2\sp{-k}\}$ is an element of $\filt$ and
$$
\{n\in\omega:\lvert \varphi(A)(n)\rvert<2\sp{-k}\}\subset\{n\in\omega:\varphi(A)(n)\in J_{\varphi\restriction m}\}
$$
so $\{n\in\omega:\varphi(A)(n)\in J_{\varphi\restriction m}\}\in\filt$. Finally, notice that  by the definition of $\varphi$
$$
\{n\in\omega:\varphi(A)(n)\in J_{\varphi\restriction m}\}\subset\{n\in\omega:\pair{m,n}\in A\}=A_{(m)},
$$
so we obtain that $A_{(m)}\in\filt$. Since this is true for all $m\in\omega$, we conclude that $A\in\filt\sp{(\omega)}$.

This concludes the proof that $A\in\filt\sp{(\omega)}$ if and only if $\varphi(A)\in K_\filt$. Also, notice that the image of $\pow(\omega\times\omega)$ under $\varphi$ is a subset of $\cubein$. Thus, we can even say that $A\in\filt\sp{(\omega)}$ if and only if $\varphi(A)\in C_\filt$. Thus, $\varphi\!\!\restriction\filt\sp{(\omega)}$ is the closed embedding we wanted. 
\end{proof}

\begin{lemma}\label{lemma2}
Let $\filt$ be a free filter on $\omega$. Then there exists a continuous surjective function $\varphi:\pow(\omega\times\omega)\to\cube$ such that $\varphi\sp\leftarrow[K_\filt]=\filt\sp{(\omega)}$.
\end{lemma}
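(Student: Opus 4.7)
The plan is to mirror the construction of Lemma~\ref{lemma1} almost verbatim, but to replace the \emph{disjoint} Cantor scheme used there by a \emph{covering} one. This realizes the classical fact that $[-1,1]$ is a continuous image of $\csf$, while the filter bookkeeping proceeds exactly as in Lemma~\ref{lemma1}.

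Concretely, I would construct a family $\{J_s:s\in 2\sp{<\omega}\}$ of closed subsets of $[-1,1]$ satisfying $J_\emptyset=[-1,1]$, $J_s=J_{s\sp\frown 0}\cup J_{s\sp\frown 1}$ (the key contrast with Lemma~\ref{lemma1}), $\operatorname{diam}(J_s)\to 0$, and the normalization $J_{\id\restriction k}=[-2\sp{-k},2\sp{-k}]$ for every $k$. A concrete scheme takes the sibling of $J_{\id\restriction k}$ to be $[-2\sp{-(k-1)},-2\sp{-k}]\cup[2\sp{-k},2\sp{-(k-1)}]$ and refines the non-central pieces by bisection. Then $\pi:\csf\to[-1,1]$ defined by $\pi(g)=\bigcap_n J_{g\restriction n}$ is continuous and surjective, with $\pi\sp{-1}(0)=\{\id\}$ (any $g\neq\id$ first deviates from $\id$ at some index $j$, so $\pi(g)\in J_{(\id\restriction j)\sp\frown 0}$, which does not contain $0$). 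Setting $\varphi(A)(n)=\pi(\chi(A\sp{(n)}))$ then defines the desired continuous surjection $\varphi:\pow(\omega\times\omega)\to\cube$, exactly as in Lemma~\ref{lemma1} but with the new $\pi$.

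The remaining task is the equality $\varphi\sp\leftarrow[K_\filt]=\filt\sp{(\omega)}$. Let $C_k=\{g\in\csf:g\restriction k=\id\restriction k\}$. The key input is the two-sided inclusion
$$
C_{m+1}\subset\{g\in\csf:|\pi(g)|<2\sp{-m}\}\subset C_m
$$
for every $m$: the left inclusion is immediate from $J_{\id\restriction(m+1)}=[-2\sp{-(m+1)},2\sp{-(m+1)}]$, and the right inclusion follows because a $g\notin C_m$ first deviates from $\id$ at some $j<m$, forcing $\pi(g)\in J_{(\id\restriction j)\sp\frown 0}\subset[-1,-2\sp{-(j+1)}]\cup[2\sp{-(j+1)},1]$ and hence $|\pi(g)|\geq 2\sp{-(j+1)}\geq 2\sp{-m}$. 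Applying this to $g=\chi(A\sp{(n)})$ and using upward closure of $\filt$, the condition ``$\{n:|\varphi(A)(n)|<2\sp{-m}\}\in\filt$ for every $m$'' is equivalent to ``$\{n:\chi(A\sp{(n)})\in C_k\}\in\filt$ for every $k$''. Since $\{n:\chi(A\sp{(n)})\in C_k\}=\bigcap_{i<k}A_{(i)}$, the latter is in turn equivalent to $A_{(i)}\in\filt$ for every $i$, that is, $A\in\filt\sp{(\omega)}$. The most delicate point is arranging the scheme so that $0$ lies in the \emph{interior} of each $J_{\id\restriction k}$: the covering condition $J_s=J_{s\sp\frown 0}\cup J_{s\sp\frown 1}$ forces neighboring pieces to share endpoints, and the symmetric choice above is what prevents $0$ from being inherited by any sibling.
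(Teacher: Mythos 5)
Your proposal is correct and follows essentially the same route as the paper: the paper's scheme is exactly this symmetric covering Cantor scheme (central pieces $[-y,y]$ split into $[-x,x]$ and $[-y,-x]\cup[x,y]$ along $\sigma$, bisection elsewhere), with $\varphi$ defined coordinatewise from the branches $\chi(A\sp{(n)})$. Your sandwich inclusion $C_{m+1}\subset\{g:\lvert\pi(g)\rvert<2\sp{-m}\}\subset C_m$ is just an explicit write-up of the verification the paper leaves to the reader as ``completely analogous to Lemma \ref{lemma1}.''
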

\begin{proof}
As before, we work on $C_\filt\subset\cube$ instead of the function space. Recursively, construct a Cantor scheme $\{J_s:s\in2\sp{<\omega}\}$ in the interval $[-1,1]$ according to the following conditions:
\begin{enumerate}
  \item $J_\emptyset=[-1,1]$,
  \item for every $s\in 2\sp{<\omega}$, $J_s$ is a non-degenerate closed interval of length $\leq\sp{2-\lvert s\rvert}$,
  \item if $s\in\sigma$, there are $0<x<y$ with $J_s=[-y,y]$, $J_{s\sp\frown1}=[-x,x]$ and $A_{s\sp\frown0}=[-y,-x]\cup[x,y]$,
  \item if $s\in\sigma$, there are $0<x<y$ with $J_{s\sp\frown\pair{0,0}}=[-y,-x]$ and $J_{s\sp\frown\pair{0,1}}=[x,y]$,
  \item if $s\in2\sp{<\omega}\setminus\sigma$ and there are $a<b$ with $J_s=[a,b]$, then there exists $x\in(a,b)$ such that $J_{s\sp\frown0}=[a,x]$ and $J_{s\sp\frown1}=[x,b]$.
 \end{enumerate}
Again define the function $\varphi:\pow(\omega\times\omega)\to\cube$ such that for all $A\subset\omega\times\omega$ and $n\in\omega$, $\varphi(A)(n)$ is the unique point in $J_{\chi(A\sp{(n)})}$.

Another standard argument implies that $\varphi$ is a continuous, surjective function. Also, the equality $\varphi\sp\leftarrow[K_\filt]=\filt\sp{(\omega)}$ can be proved in a manner completely analogous to the corresponding equality from Lemma \ref{lemma1}. Thus, we will leave this argument to the reader.
\end{proof}

Finally, the following allows us to simplify the characterization we will obtain.

\begin{lemma}\label{lemma3}
Let $\filt$ be a free filter. Then $\left(\filt\sp{(\omega)}\right)\sp\plus$ is Menger if and only if $\filt\sp+$ is Menger.
\end{lemma}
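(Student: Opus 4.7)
The plan is to compute $(\filt\sp{(\omega)})\sp+$ concretely and then alternate between the two directions using the elementary properties of Menger spaces listed in the preliminaries. Unpacking the dual, $A\in(\filt\sp{(\omega)})\sp\ast$ iff $(\omega\times\omega)\setminus A\in\filt\sp{(\omega)}$ iff $\omega\setminus A_{(n)}\in\filt$ for every $n\in\omega$; taking complements in $\pow(\omega\times\omega)$ gives
\[ (\filt\sp{(\omega)})\sp+=\{A\subset\omega\times\omega:\exists n\in\omega\ (A_{(n)}\in\filt\sp+)\}=\bigcup_{n\in\omega}\pi_n\sp\leftarrow[\filt\sp+], \]
where $\pi_n:\pow(\omega\times\omega)\to\pow(\omega)$ is the continuous projection $A\mapsto A_{(n)}$.

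For the forward direction, the map $B\mapsto\omega\times B$ is continuous and injective from $\pow(\omega)$ into $\pow(\omega\times\omega)$; its image is the set $\{A:A_{(n)}=A_{(m)}\text{ for all }n,m\in\omega\}$, which is closed since each of the defining conditions is clopen, and its inverse is the continuous map $A\mapsto A_{(0)}$. Since $(\omega\times B)_{(n)}=B$ for every $n$, this embedding sends $\filt\sp+$ homeomorphically onto a closed subspace of $(\filt\sp{(\omega)})\sp+$, and so Mengerness transfers from $(\filt\sp{(\omega)})\sp+$ to $\filt\sp+$ by property (ii).

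For the converse, the standard homeomorphism $\pow(\omega\times\omega)\cong\pow(\omega)\sp\omega$ induced by $A\mapsto(A_{(n)})_{n\in\omega}$ carries each $\pi_n\sp\leftarrow[\filt\sp+]$ onto the set of sequences whose $n$-th coordinate lies in $\filt\sp+$, which is homeomorphic to $\filt\sp+\times\pow(\omega)\sp{\omega\setminus\{n\}}$. The second factor is compact, so property (iii) makes each $\pi_n\sp\leftarrow[\filt\sp+]$ Menger whenever $\filt\sp+$ is, and then $(\filt\sp{(\omega)})\sp+$ is Menger as a countable union of Menger subspaces by property (vi). There is no real obstacle here: once the identification $(\filt\sp{(\omega)})\sp+=\bigcup_n\pi_n\sp\leftarrow[\filt\sp+]$ is in hand, both implications reduce to the basic preservation facts quoted in the preliminaries.
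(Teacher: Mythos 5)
Your proof is correct and follows essentially the same route as the paper: one direction decomposes $(\filt\sp{(\omega)})\sp+$ as the countable union $\bigcup_n\pi_n\sp\leftarrow[\filt\sp+]$, each piece Menger as a product of $\filt\sp+$ with a compact factor, and the other direction finds a closed copy of $\filt\sp+$ inside $(\filt\sp{(\omega)})\sp+$ (your image of $B\mapsto\omega\times B$ is exactly the paper's diagonal of constant sequences). No gaps.
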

\begin{proof}
First, assume that $\filt\sp+$ is Menger. For each $n\in\omega$, consider $M_n=\{A\subset\omega\times\omega:A_{(n)}\in\filt\sp+\}$, which is homeomorphic to the product $\filt\sp+\times\pow(\omega)\sp\omega$. Since the product of a Menger space and a compact space is Menger, it follows that $M_n$ is Menger for every $n\in\omega$. Then notice that $(\filt\sp{(\omega)})\sp+=\bigcup\{M_n:n\in\omega\}$ is a countable union of Menger spaces so it is Menger.

Now assume that $(\filt\sp{(\omega)})\sp+$ is Menger. The diagonal in a product is always a closed subspace and the diagonal of $(\filt\sp{(\omega)})\sp+$ is equal to the set 
$$
\{A\subset\omega\times\omega:\exists F\in\filt\sp+\ \forall n\in\omega\ (A_{(n)}=F)\},
$$
which is homeomorphic to $\filt\sp+$. Then $\filt\sp+$ is Menger because it is a closed subspace of a Menger space.
\end{proof}

\begin{proof}[Proof of Theorem \ref{actualthm}]
By Lemma \ref{lemma3}, it is enough to prove that $C_\filt$ is Menger at infinity if and only if $\filt\sp{(\omega)}$ is Menger at infinity. 

First, assume that $C_\filt$ is Menger at infinity. This means that $\cube\setminus C_\filt$ is Menger. By Lemma \ref{lemma1}, $\filt\sp{(\omega)}$ can be embedded as a closed set $F$ in $C_\filt$. Let $\overline{F}$ denote the closure of $F$ in $\cube$. Then $\overline{F}\setminus F$ is a closed subset of $\cube\setminus C_\filt$. Then $\overline{F}$ is a compactification of $\filt\sp{(\omega)}$ with Menger remainder.

Now, assume that $\filt\sp{(\omega)}$ is Menger at infinity. Let $\varphi:\pow(\omega\times\omega)\to\cube$ be the continuous surjection from Lemma \ref{lemma2}. Then it follows that $\varphi[\pow(\omega\times\omega)\setminus\filt\sp{(\omega)}]=\cube\setminus K_\filt$. Since the Menger property is preserved under continuous functions, $\cube\setminus K_\filt$ is Menger. Notice that
$$
\cube\setminus C_\filt=(\cube\setminus K_\filt)\cup B(\cube).
$$
Since the boundary $B(\cube)$ is $\sigma$-compact and the union of countably many Menger spaces is Menger, $\cube\setminus C_\filt$ is Menger. This concludes the proof of the Theorem.
\end{proof}

\section{Questions}\label{quessection}

Let $\filt$ be a free filter on $\omega$ such that $\filt\sp+$ is Menger. We have just proved that $C_p(\xi(\filt))$ is Menger at infinity. By the observations of Aurichi and Bella from \cite{aurichi-bella}, $C_p(\xi(\filt))$ is a hereditarily Baire space. Then, by Marciszewski's result from \cite{marciszewski} it follows that $\filt$ is a non-meager $P$-filter. Thus, inadvertently we proved the following, which supersedes \cite[Observation 3.4]{menger-remainders} (for filters only).

\begin{coro}
Let $\filt$ be a free filter on $\omega$. If $\filt\sp+$ is Menger, then $\filt$ is a non-meager $P$-filter.
\end{coro}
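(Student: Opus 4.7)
The plan is to assemble three pieces already available in the excerpt into a short chain of implications, so the proof is essentially a bookkeeping exercise rather than a new argument.

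First, I would invoke Theorem \ref{actualthm}: the hypothesis that $\filt\sp+$ is Menger is exactly the condition that $C_p(\xi(\filt))$ is Menger at infinity. This gives the first link in the chain and converts the combinatorial/topological hypothesis about the filter into a statement about the function space.

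Next, I would appeal to the result cited in the introduction from \cite{menger-remainders} (Proposition 6.2), together with the Aurichi--Bella observations from \cite{aurichi-bella}, to conclude that $C_p(\xi(\filt))$ being Menger at infinity forces it to be hereditarily Baire. This is a direct quotation; no work is required beyond noting that $\xi(\filt)$ is a Tychonoff space for which the cited implication applies.

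Finally, I would apply Marciszewski's theorem (stated at the beginning of Section \ref{proofsection}): for a free filter $\filt$, hereditary Baireness of $C_p(\xi(\filt))$ is equivalent to $\filt$ being a non-meager $P$-filter. Concatenating the three implications yields the corollary. I do not anticipate any real obstacle, since each arrow is already recorded in the paper; the only thing to check is that the hypotheses of each cited result are met, which is immediate once one notes that $\filt$ is free and that $\xi(\filt)$ is a regular countable space with a single non-isolated point.
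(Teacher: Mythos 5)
Your proposal is correct and coincides with the paper's own argument: the paper derives the corollary by exactly the same chain, namely Theorem \ref{actualthm}, then the fact that Menger at infinity implies hereditarily Baire (via \cite{menger-remainders} and \cite{aurichi-bella}), then Marciszewski's characterization. (The paper also sketches a second, more direct argument using a closed copy of the rationals, but that is offered only as an alternative.)
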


Here is another more direct proof: Assume that $\filt$ is a free filter that is not a non-meager $P$-filter. By Lemmas 2.1 and 2.2 from \cite{marciszewski}, we obtain that $\filt$ has a closed subset $Q$ homeomorphic to the rationals. The closure $\overline{Q}$ of $Q$ in $\pow(\omega)$ must be homeomorphic to the Cantor set. Also, $\overline{Q}\setminus Q$ is homeomorphic to $\omega\sp\omega$, contained in $\pow(\omega)\setminus\filt$ and closed in $\pow(\omega)\setminus\filt$. Since $\omega\sp\omega$ is not Menger and the Menger property is hereditary to closed sets, $\pow(\omega)\setminus \filt$ is not Menger.

By \cite[2.7]{hg-sz} every filter of character $<\mathfrak{d}$ is a Menger filter. However, it not hard to conclude from \cite[4.1.2]{bart} that any filter of character $<\mathfrak{d}$ is meager. So indeed none of these filters can have its positive set Menger.

As discussed earlier, the existence of a non-meager $P$-filter in ZFC is still an open question. But so far the only example of a filter $\filt$ with $\filt\sp+$ Menger is a Menger ultrafilter, which we know that consistently does not exist. So it is natural to ask about the consistency of filters that are Menger at infinity.

\begin{ques}
Does ZFC imply that there exists a free filter $\filt$ on $\omega$ such that $\filt\sp+$ is Menger?
\end{ques}

\begin{ques}
Is the existence of a free Menger ultrafilter on $\omega$ equivalent to the existence of a free filter $\filt$ on $\omega$ such that $\filt\sp+$ is Menger?
\end{ques}

Finally, regarding the original question from \cite{menger-remainders}, we could ask for examples with other properties. In \cite[Proposition 3.3]{marciszewski}, Marciszewski gave general conditions for a countable space $X$ to have $C_p(X)$ hereditarily Baire. So we may ask what happens in general.

\begin{ques}
Does there exist a countable, regular, crowded space $X$ such that $C_p(X)$ is Menger at infinity?
\end{ques}

\begin{ques}
Does there exist a countable, regular, maximal space $X$ such that $C_p(X)$ is Menger at infinity?
\end{ques}

\begin{ques}
Characterize all countable regular spaces $X$ such that $C_p(X)$ is Menger at infinity.
\end{ques}

\section*{Acknowledgements}

The research that led to the present paper was partially supported by a grant of the group GNSAGA of INdAM. The second-named author was also supported by the 2017 PRODEP grant UAM-PTC-636 awarded by the Mexican Secretariat of Public Education (SEP).

\end{document}